\DeclareMathOperator{\Spec}{Spec}
\DeclareMathOperator{\Aut}{Aut}
\DeclareMathOperator{\Hom}{Hom}
\newtheorem{thm}{Theorem}[section]
\newtheorem{prop}[thm]{Proposition}
\newtheorem{lem}[thm]{Lemma}
\newtheorem{rem}[thm]{Remark}
\newtheorem{eg}[thm]{Example}
\newtheorem*{thmn}{Theorem}
\newcommand{\F}{\mathbb{F}}
\begin{document}

\title
[Finite flat models of constant group schemes of rank two]%
{Finite flat models of constant group schemes\\ of rank two}
\author{Naoki Imai}
\address{Graduate School of Mathematical Sciences, 
the University of Tokyo, 3-8-1 Komaba, Meguro-ku, Tokyo 153-8914, Japan.}
\email{naoki@ms.u-tokyo.ac.jp}

\subjclass[2000]{Primary: 11G25; Secondary: 14L15}

\maketitle

\begin{abstract}
We calculate the number of 
the isomorphism class of 
the finite flat models 
over the ring of integers of 
an absolutely ramified $p$-adic field 
of constant group schemes of rank two 
over finite fields, 
by counting the rational points of 
a moduli space of finite flat models. 
\end{abstract}

\section*{Introduction}
Let $K$ be a totally ramified extension 
of degree $e$ over $\mathbb{Q}_p$ for $p>2$, and $\F$ be 
a finite field of characteristic $p$. 
We consider the constant group scheme $C_{\F}$ 
over $\Spec K$ 
of the two-dimensional vector space 
over $\F$. 
A finite flat models of $C_{\F}$ is 
a pair 
$(\mathcal{G},C_{\F} \xrightarrow{\sim} \mathcal{G}_K )$ 
such that $\mathcal{G}$ is a finite flat group scheme 
over $\mathcal{O}_K$ with a structure 
of an $\F$-vector space. 
Here $\mathcal{G}_K$ is the generic fiber of $\mathcal{G}$, 
and $C_{\F} \xrightarrow{\sim} \mathcal{G}_K$ is 
an isomorphism of group schemes over 
$\Spec K$ that is 
compatible with the action of $\F$. 
Let $M(C_{\F} ,K)$ be the set of 
the isomorphism class of 
the finite flat models of $C_{\F}$. 
If $e <p-1$, then 
$M(C_{\F} ,K)$ is one-point set 
by \cite[Theorem 3.3.3]{Ray}. 
However, if the ramification is big, 
there are surprisingly many finite flat models. 
In this paper, 
we calculate 
the number of the isomorphism class 
of the finite flat models of $C_{\F}$, 
that is, 
$|M(C_{\F} ,K)|$. 
The main theorem is the following. 

\begin{thmn}
Let $q$ be the cardinality of $\F$. 
Then we have 
\[
 |M(C_{\F} ,K)| = 
 \sum_{n \geq 0} (a_n +a_n ')q^n . 
\]
Here $a_n$ and $a_n '$ are defined as 
in the following. 

We express $e$ and $n$ by 
\[
 e=(p-1)e_0 +e_1 ,\ 
 n=(p-1)n_0 +n_1 =(p-1)n_0 '+n_1 ' +e_1
\]
such that $e_0 ,n_0 ,n_0 '\in \mathbb{Z}$ 
and $0\leq e_1 ,n_1 ,n_1 '\leq p-2$. 
Then 
\begin{align*}
 a_n =& \max \bigl\{
 e_0 -(p+1)n_0 -n_1 -1,0 \bigr\} & 
 &\textrm{ if } 
 n_1 \neq 0,1, \\ 
 a_n =& \max \bigl\{
 e_0 -(p+1)n_0 -n_1 -1,0 \bigr\} & & \\ 
 &+\max \bigl\{
 e_0 -(p+1)n_0 -n_1 +1,0 \bigr\} & 
 &\textrm{ if } 
 n_1 = 0,1, \\ 
\intertext{and} 
 a_n ' =& \max \bigl\{
 e_0 -e_1 -(p+1)n_0 '-n_1 '-2,0 \bigr\} & 
 &\textrm{ if } 
 n_1 '\neq 0,1, \\ 
 a_n '=& \max \bigl\{
 e_0 -e_1 -(p+1)n_0 '-n_1 '-2,0 \bigr\} & & \\ 
 &+\max \bigl\{
 e_0 -e_1 -(p+1)n_0 '-n_1 ',0 \bigr\} & 
 &\textrm{ if } 
 n_1 '= 0,1 
\end{align*}
except in the case where $n=0$ and 
$e_1 =p-2$, in which case 
we put $a_0 '=e_0$. 
\end{thmn}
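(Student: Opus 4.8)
The plan is to recast everything in semilinear algebra via Breuil modules (available since $p>2$), to reinterpret ``constant group scheme'' as ``trivial generic $\phi$-module'', and then to realize $M(C_{\F},K)$ as the set of $\F$-points of a moduli space which is a disjoint union of affine spaces, whose point count is the asserted sum. Since $K/\mathbb{Q}_p$ is totally ramified its residue field is $\F_p$, so finite flat group schemes over $\mathcal{O}_K$ killed by $p$ correspond, antiequivalently, to Breuil modules over $S=\F_p[u]/u^{ep}$ with $\Fr(u)=u^p$ (equivalently, to Kisin modules of height $\leq 1$ over $\F_p[[u]]$); the $\F$-action makes such a module a module over $\F\otimes_{\F_p}S=\F[u]/u^{ep}$, and ``of rank two over $\F$'' means it is free of rank two there. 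The key point is that $\mathcal{G}_K$ being the constant group scheme $C_{\F}$ is equivalent to the associated étale $\phi$-module over $\F\otimes_{\F_p}\F_p((u))=\F((u))$ being the trivial one (standard $\Fr$-semilinear Frobenius on a fixed basis), and that an isomorphism of finite flat models is exactly an isomorphism of the corresponding modules inducing the identity on this trivialized generic $\phi$-module. Hence $M(C_{\F},K)$ is in bijection with the set of $\phi$-stable $\F[[u]]$-lattices of height $\leq 1$ inside the trivial rank-two étale $\phi$-module, i.e. with $\{\,g\in \mathrm{GL}_2(\F((u)))/\mathrm{GL}_2(\F[[u]]):\ g^{-1}\Fr(g)\ \text{has elementary divisors}\ (u^a,u^b)\ \text{with}\ 0\le a\le b\le e\,\}$; this is the set of $\F$-points of the moduli space of finite flat models, and I would invoke the affine-Grassmannian/Breuil-module formalism to make its Schubert-type stratification precise.

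Next I would stratify and compute dimensions. The discrete invariant of such a lattice is the pair $(a,b)$ above; since $\wedge^2\mathcal{G}$ is again finite flat of height $\leq 1$ one gets $a+b\le e$, and comparing the determinant of $1\otimes\phi$ with $\det(g)^{-1}\Fr(\det g)$ forces $a+b\equiv 0\pmod{p-1}$. Refining by the type of the reduction of $g$ and stratifying accordingly, within each stratum I would put $g$ into a normal form using the residual $\mathrm{GL}_2(\F[[u]])$-action, turning the condition ``$g^{-1}\Fr(g)$ has the prescribed elementary divisors'' into an explicit system of $\Fr$-semilinear equations on the free parameters of the cell; solving it and quotienting by the remaining stabiliser identifies each nonempty stratum with an affine space $\mathbb{A}^{d}_{\F}$ for an explicit piecewise-linear $d$ in $(a,b,e,p)$. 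The dimension bookkeeping uses the rank-one case for the diagonal blocks (a height-$r$ rank-one Breuil module of trivial character occurs only for $(p-1)\mid r$ and contributes $r/(p-1)$-type terms) together with a direct count of solutions of a single twisted linear equation for the off-diagonal entry. Sorting the strata by a congruence of the elementary-divisor datum modulo $p-1$ splits the eventual sum into the two families: this is what produces the shift by $e_1$ and the two residue classes $n\equiv n_1$ versus $n\equiv n_1'+e_1$ appearing in the statement, and the extra summand present when $n_1\in\{0,1\}$ (respectively $n_1'\in\{0,1\}$) records one additional boundary stratum of the same dimension.

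Finally I would resum: $|M(C_{\F},K)|=\sum_{\text{strata}}q^{d}$, reindexed by $n=d$. On each level set $\{d=n\}$ the admissible $(a,b)$, inside the triangle cut out by $0\le a\le b$, $a+b\le e$, $a+b\equiv 0\pmod{p-1}$, form an interval of integers, and counting its length produces precisely the quantities $\max\{e_0-(p+1)n_0-n_1\mp 1,0\}$ and $\max\{e_0-e_1-(p+1)n_0'-n_1'-\ast,0\}$. The exceptional value $a_0'=e_0$ when $e_1=p-2$ reflects a collision at the corner of this triangle: for that residue the top boundary stratum $a+b=e$ degenerates, two a priori distinct strata coincide, and an extra automorphism has to be divided out, replacing the generic $q^0$-coefficient by $e_0$.

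The main obstacle is the middle step: deciding exactly which strata are nonempty and computing $d$ correctly. Concretely one must (i) treat ``the generic $\phi$-module is trivial'' as a genuine equation on $g$, not merely on its reduction modulo $u$; (ii) check that on each stratum the residual change-of-basis group acts so that the quotient is an honest affine space rather than a messier variety; and (iii) correctly recognise when two normal forms give isomorphic group schemes --- in particular the symmetry exchanging the two diagonal blocks, the degeneracy at $a=b$, and the behaviour along the edges $a=0$ and $a+b=e$. Once $d$ is in hand, the passage from $\sum q^{d}$ to the closed form with $a_n$ and $a_n'$ is a careful but elementary re-summation.
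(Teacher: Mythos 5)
Your setup is essentially the paper's: via Kisin's results the problem becomes counting $\phi$-stable $\F[[u]]$-lattices of $u$-height $\le e$ in the trivial rank-two \'etale $\phi$-module over $\F((u))$, i.e.\ counting the $\F$-points of $\mathscr{GR}_{V_{\F},0}$. The divergence, and the gap, is in the stratification. You propose to stratify by the elementary divisors $(u^a,u^b)$ of $g^{-1}\Fr(g)$ and assert that each nonempty stratum is an affine space $\mathbb{A}^d$; this is exactly the step you flag as ``the main obstacle,'' and it is not established. Such elementary-divisor strata are not obviously affine spaces, and they cut across the natural structure of the problem: for a lattice in Iwasawa form $\left(\begin{smallmatrix} u^s & v \\ 0 & u^t\end{smallmatrix}\right)\cdot\mathfrak{M}_{\F,0}$ the Frobenius matrix is $\left(\begin{smallmatrix} u^{(p-1)s} & -vu^{(p-1)s}+\phi(v)u^{(p-1)t} \\ 0 & u^{(p-1)t}\end{smallmatrix}\right)$, whose elementary divisors vary with $v$ inside a single Iwasawa cell. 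The paper instead stratifies by the Iwasawa parameters $(s,t)$ of the lattice itself; each piece $\mathscr{GR}_{V_{\F},0,s,t}(\F)$ is parametrized by $v$ modulo $u^t\F[[u]]$, the height condition becomes the single valuation inequality $v_u (vu^{(p-1)s} -\phi (v)u^{(p-1)t}) \geq \max \{ 0,(p-1)(s+t)-e \}$, and solving it directly gives exactly $q^{h_{s,t}}$ points with $h_{s,t}$ computed explicitly in four cases. That computation is the entire content of the proof and is absent from your sketch.

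Your heuristics for the shape of the answer also do not match the actual mechanism. The split into $a_n$ and $a_n'$ comes from the dichotomy $s+t\le e_0$ versus $s+t>e_0$ (whether $\max\{0,(p-1)(s+t)-e\}$ vanishes), not from a congruence class of the elementary-divisor datum; the extra summand for $n_1\in\{0,1\}$ comes from $[(p-1)t/p]=n$ having two solutions in $t$ when $n_1=0$, and from the region $ps<t$ contributing a whole interval of pairs $(s,t)$ when $n_1=1$ --- not from ``one additional boundary stratum of the same dimension.'' Likewise the exceptional value $a_0'=e_0$ when $e_1=p-2$ has nothing to do with dividing out an extra automorphism: it occurs because $n_0'=-1$ forces $t=e_0-n_0'>e_0$ out of range, so $S_{0,2}'$ is empty instead of contributing $\max\{e_0-e_1-(p+1)n_0'-1,0\}$. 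In short, the framework is right, but the central point count is missing and the combinatorics organizing the final resummation is misidentified, so the argument as proposed does not yet constitute a proof.
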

In the above theorem, 
we can easily check that 
$|M(C_{\F} ,K)| =1$ if $e<p-1$. 

\subsection*{Acknowledgment}
The author is grateful to his advisor Takeshi Saito 
for his careful reading of an earlier version of this paper 
and for his helpful comments. 
He would like to thank the referee 
for a careful reading of this paper 
and suggestions for improvements. 

\subsection*{Notation}
Throughout this paper, we use the following notation.
Let $p>2$ be a prime number, and $K$ be
a totally ramified extension of $\mathbb{Q}_p$ of degree $e$. 
The ring of integers of $K$ is denoted by $\mathcal{O}_K$, 
and the absolute Galois group of $K$ is 
denoted by $G_K$. 
Let $\F$ be a finite field of characteristic $p$.
The formal power series ring of $u$ over $\F$ is
denoted by $\F [[u]]$, 
and its quotient field is denoted by $\F ((u))$. 
Let $v_u$ be the valuation of 
$\F ((u))$ normalized by $v_u (u)=1$, 
and we put $v_u (0) =\infty$. 
For $x \in \mathbb{R}$, the greatest integer 
less than or equal to $x$ is denoted by $[x]$. 
\section{Preliminaries}
To calculate the number of 
finite flat models of $C_{\F}$, 
we use the moduli spaces of 
finite flat models constructed by Kisin in \cite{Kis}. 

Let $V_{\F}$ be the two-dimensional 
trivial representation of $G_K$ over $\F$. 
The moduli space of finite flat models of $V_{\F}$, 
which is denoted by $\mathscr{GR}_{V_{\F},0}$, 
is a projective scheme over $\F$. 
An important property of $\mathscr{GR}_{V_{\F},0}$ 
is the following Proposition.

\begin{prop}\label{property}
For any finite extension $\F'$ of $\F$, 
there is a natural bijection between 
the set of isomorphism classes of finite flat models 
of $V_{\F'} =V_{\F} \otimes _{\F} \F'$ 
and $\mathscr{GR}_{V_{\F},0} (\F')$.
\end{prop}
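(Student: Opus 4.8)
The plan is to obtain the proposition by unwinding Kisin's construction of $\mathscr{GR}_{V_{\F},0}$ and applying his classification theorems from \cite{Kis}. Fix a uniformizer $\pi$ of $K$, let $E(u)$ be its Eisenstein polynomial, and put $K_{\infty}=K(\pi^{1/p^{\infty}})$. Recall that Kisin realizes $\mathscr{GR}_{V_{\F},0}$ inside an affine Grassmannian as the moduli scheme of $\varphi$-stable $\F[[u]]$-lattices $\mathfrak{M}$ in the étale $\varphi$-module $M$ over $\F((u))$ attached by Fontaine's theory to $V_{\F}|_{G_{K_{\infty}}}$, subject to the ``finite flat'' condition that the cokernel of the linearized Frobenius $\varphi^{*}\mathfrak{M}\to\mathfrak{M}$ be killed by $E(u)$. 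The first step is to describe the $\F'$-points: since the affine Grassmannian represents the functor of such lattices, $\mathscr{GR}_{V_{\F},0}(\F')$ is the set of $\varphi$-stable height-$\le 1$ lattices over $\F'[[u]]$ in $M\otimes_{\F}\F'$, equivalently the set of Breuil--Kisin modules of height $\le 1$ over $\F'[[u]]$ together with an isomorphism of their associated étale $\varphi$-module with $M\otimes_{\F}\F'$, which is the étale $\varphi$-module of $V_{\F'}|_{G_{K_{\infty}}}$.

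The second step is to invoke the classification results of \cite{Kis}, both available because $p>2$: first, the anti-equivalence between finite flat group schemes over $\mathcal{O}_{K}$ killed by $p$, with their $\F$-action, and Breuil--Kisin modules of height $\le 1$ over $\F[[u]]$, compatibly, on passing to generic fibres, with Fontaine's equivalence between $\F$-representations of $G_{K_{\infty}}$ and étale $\varphi$-modules over $\F((u))$; and second, the statement that the generic fibre functor identifies finite flat models over $\mathcal{O}_{K}$ of a $G_{K}$-representation with the finite flat models of its restriction to $G_{K_{\infty}}$, so that the $G_{K}$-action on the generic fibre of the group scheme attached to a Breuil--Kisin module is the expected one. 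Applying these over $\F'$ to a point of $\mathscr{GR}_{V_{\F},0}(\F')$ yields a finite flat group scheme $\mathcal{G}$ over $\mathcal{O}_{K}$ with $\F$-action together with an isomorphism $C_{\F}\otimes_{\F}\F'\xrightarrow{\sim}\mathcal{G}_{K}$ of group schemes over $K$ compatible with the $\F$-action, that is, a finite flat model of $V_{\F'}$; conversely every such model arises, its Breuil--Kisin module recovering the lattice. Two finite flat models define the same $\F'$-point exactly when they are isomorphic as models --- the rigidifying isomorphisms force the two Breuil--Kisin modules to coincide as sublattices of $M\otimes_{\F}\F'$ --- so this is the asserted bijection on isomorphism classes.

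For naturality in $\F'$, I would check that an inclusion $\F'\hookrightarrow\F''$ induces on $\mathscr{GR}_{V_{\F},0}$ the transition map $\mathfrak{M}\mapsto\mathfrak{M}\otimes_{\F'}\F''$, which under the above dictionary corresponds to $\mathcal{G}\mapsto\mathcal{G}\otimes_{\F'}\F''$ with its extended $\F$-action --- precisely the natural map on isomorphism classes of finite flat models. The routine parts are the functor-of-points description of the affine Grassmannian and the bookkeeping of the $\F$-linear structures and of the rigidifications. The main obstacle, and the place where $p>2$ is genuinely used, is the second input from \cite{Kis} above: a point of $\mathscr{GR}_{V_{\F},0}$ a priori records only an étale $\varphi$-module, hence only the $G_{K_{\infty}}$-structure of the generic fibre, and one must know that it nevertheless pins down a finite flat model of the $G_{K}$-representation $V_{\F}$ itself.
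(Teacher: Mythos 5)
Your proposal is correct and takes the same route as the paper: the paper's entire proof of this proposition is the single citation ``This is [Kis, Corollary 2.1.13]'', and what you have written is precisely an unwinding of the ingredients of that corollary (the lattice description of the $\F'$-points of the moduli space, the classification of finite flat group schemes killed by $p$ via Breuil--Kisin modules for $p>2$, and the fact that restriction to $G_{K_{\infty}}$ loses no information about finite flat models). You correctly isolate the last of these as the genuinely non-routine input, so there is no discrepancy to report.
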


\begin{proof}
This is \cite[Corollary 2.1.13]{Kis}.
\end{proof}

By Proposition \ref{property}, to calculate the number of finite flat models, 
it suffices to count the number of the 
$\F$-rational points of $\mathscr{GR}_{V_{\F},0}$. 

Let $\mathfrak{S} = \mathbb{Z}_p [[u]] $, and $\mathcal{O}_{\mathcal{E}}$ be 
the $p$-adic completion of $\mathfrak{S} [1/u]$. 
There is an action of $\phi$ on 
$\mathcal{O}_{\mathcal{E}}$ 
determined by identity on $\mathbb{Z}_p$ 
and $u \mapsto u^p$. 
We choose elements $\pi _m \in \overline{K}$ such that
$\pi _0 = \pi$ and $\pi ^p _{m+1} = \pi _m$ for $m \geq 0$, 
and put $K_{\infty} = \bigcup _{m\geq 0} K(\pi _m )$. 
Let $\Phi {\mathrm{M}}_{\mathcal{O}_{\mathcal{E}},\F}$ 
be the category of finite 
$\mathcal{O}_{\mathcal{E}} \otimes_{\mathbb{Z}_p} \F$-modules 
$M$ equipped with $\phi$-semi-linear map 
$M \to M$ such that the induced 
$\mathcal{O}_{\mathcal{E}} \otimes_{\mathbb{Z}_p} \F$-linear 
map $\phi ^* (M) \to M$ is an isomorphism. 
We take the $\phi$-module 
$M_{\F} \in \Phi {\mathrm{M}}_{\mathcal{O}_{\mathcal{E}},\F}$ 
that corresponds to the $G_{K_{\infty}}$-representation 
$V_{\F} (-1)$. Here $(-1)$ denotes 
the inverse of the Tate twist. 

The moduli space $\mathscr{GR}_{V_{\F},0}$ is described via 
the Kisin modules as in the following.

\begin{prop}\label{description}
For any finite extension $\F'$ of $\F$, 
the elements of $\mathscr{GR}_{V_{\F},0}(\F')$ 
naturally correspond to free
$\F'[[u]]$-submodules 
$\mathfrak{M}_{\F'} \subset M_\F \otimes _{\F} \F'$ 
of rank $2$ that satisfy 
$u^e \mathfrak{M}_{\F'} \subset 
(1\otimes \phi ) \bigl( \phi ^* (\mathfrak{M}_{\F'}) \bigr)
\subset \mathfrak{M}_{\F'}$.
\end{prop}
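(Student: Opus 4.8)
The plan is to obtain this as the mod-$p$ specialization of Kisin's moduli description in \cite{Kis}. Recall that $\mathscr{GR}_{V_{\F},0}$ is constructed in \cite{Kis} so that, for an $\F$-algebra $A$, its $A$-points are the $A[[u]]$-lattices $\mathfrak{M}_A \subset M_{\F} \otimes_{\F} A$ which are $\phi$-stable and of ``height $\leq 1$'', the latter meaning $E(u)\mathfrak{M}_A \subset (1\otimes\phi)\bigl(\phi^*(\mathfrak{M}_A)\bigr)$, where $E(u) \in \mathbb{Z}_p[u]$ is the Eisenstein polynomial of the uniformizer $\pi$ of $K$; here one uses that $M_{\F}$ is the étale $\phi$-module of $V_{\F}(-1)$, and that finite flat models of $V_{\F'}$ are classified by such Kisin lattices. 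Taking $A = \F'$ already gives a natural (in $\F'$, since it comes from a morphism of functors) bijection between $\mathscr{GR}_{V_{\F},0}(\F')$ and the set of $\phi$-stable, height $\leq 1$ lattices in $M_{\F}\otimes_{\F}\F'$; it then remains only to rewrite the two conditions.

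First I would use that $K/\mathbb{Q}_p$ is totally ramified of degree $e$, so $E(u) = u^e + p\,g(u)$ for some $g \in \mathbb{Z}_p[u]$, whence $E(u) \equiv u^e$ in $\F[[u]]$. Thus the height $\leq 1$ condition is exactly $u^e\mathfrak{M}_{\F'} \subset (1\otimes\phi)\bigl(\phi^*(\mathfrak{M}_{\F'})\bigr)$. Second, $\phi$-stability means $\phi(\mathfrak{M}_{\F'}) \subset \mathfrak{M}_{\F'}$, and since $(1\otimes\phi)\bigl(\phi^*(\mathfrak{M}_{\F'})\bigr)$ is the $\F'[[u]]$-submodule of $M_{\F}\otimes_{\F}\F'$ generated by $\phi(\mathfrak{M}_{\F'})$, this is equivalent to $(1\otimes\phi)\bigl(\phi^*(\mathfrak{M}_{\F'})\bigr) \subset \mathfrak{M}_{\F'}$. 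Combining these two reformulations yields the chain of inclusions in the statement.

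For the freeness and rank: $M_{\F}$ is free of rank $2$ over $\mathcal{O}_{\mathcal{E}}\otimes_{\mathbb{Z}_p}\F = \F((u))$ because it corresponds to the two-dimensional representation $V_{\F}(-1)$, so $M_{\F}\otimes_{\F}\F'$ is a two-dimensional $\F'((u))$-vector space. A rank $2$ free $\F'[[u]]$-submodule $\mathfrak{M}_{\F'}$ automatically satisfies $\mathfrak{M}_{\F'}[1/u] = M_{\F}\otimes_{\F}\F'$, so the notion of such a submodule coincides with the notion of $\F'[[u]]$-lattice appearing in Kisin's construction; conversely a Kisin lattice, being a finitely generated torsion-free module over the discrete valuation ring $\F'[[u]]$ that spans $M_{\F}\otimes_{\F}\F'$ over $\F'((u))$, is free of rank $2$.

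I expect the only genuine point of care — the ``hard part'', such as it is — to be matching Kisin's normalizations: verifying that the relevant étale $\phi$-module is the one attached to the Tate twist $V_{\F}(-1)$ (which is precisely what makes the effectivity/height $\leq 1$ condition the natural one), and that the inclusions run in the direction written in the statement. Once these conventions are aligned with \cite{Kis}, the proposition is a direct reformulation of the moduli description there together with the congruence $E(u)\equiv u^e$ modulo $p$.
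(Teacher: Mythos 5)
Your proposal is correct and takes essentially the same route as the paper, which simply cites the construction of $\mathscr{GR}_{V_{\F},0}$ in \cite[Corollary 2.1.13]{Kis}; you merely spell out the translation (the congruence $E(u)\equiv u^e$ modulo $p$ for a totally ramified extension, the reformulation of $\phi$-stability and the height $\leq 1$ condition, and freeness of lattices over the discrete valuation ring $\F'[[u]]$) that the paper leaves implicit.
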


\begin{proof}
This follows from the construction 
of $\mathscr{GR}_{V_{\F},0}$ in 
\cite[Corollary 2.1.13]{Kis}. 
\end{proof}

By Proposition \ref{description}, 
we often identify a point of 
$\mathscr{GR}_{V_{\F},0}(\F')$ with 
the corresponding finite free 
$\F' [[u]]$-module. 

For $A \in GL_2 \bigl( \F ((u)) \bigr)$, 
we write $M_{\F} \sim A$ 
if there is a basis $\{ e_1 ,e_2 \}$ 
of $M_{\F}$ over $\F ((u))$ 
such that 
$\phi 
\begin{pmatrix} e_1 \\ e_2 \end{pmatrix}
 = A 
\begin{pmatrix} e_1 \\ e_2 \end{pmatrix}$.
We use the same notation for any sublattice 
$\mathfrak{M} _{\F} \subset M_{\F}$ similarly. 

Finally, 
for any sublattice 
$\mathfrak{M} _{\F} \subset M_{\F}$ with a chosen 
basis $\{ e_1 ,e_2 \}$ and 
$B \in GL _2 \bigl( \F((u)) \bigr)$, 
the module 
generated by the entries of 
$\biggl{\langle} B 
\begin{pmatrix} e_1 \\ e_2 \end{pmatrix} 
\biggr{\rangle}$ 
with the basis given by these entries 
is denoted by 
$B\cdot \mathfrak{M} _{\F}$. 
Note that $B\cdot \mathfrak{M} _{\F}$ depends on 
the choice of the basis of $\mathfrak{M} _{\F}$. 
We can see that 
if $\mathfrak{M}_{\F} \sim A$ for 
$A \in GL_2 \bigl( \F((u)) \bigr)$ 
with respect to a given basis, 
then we have 
\[
 B\cdot \mathfrak{M} _{\F} \sim 
 \phi (B)AB^{-1} 
\]
with respect to the induced basis. 

\begin{lem}\label{equiv}
Suppose $\F'$ is a finite extension of $\F$, 
and $x \in \mathscr{GR}_{V_{\F},0}(\F')$ 
corresponds to 
$\mathfrak{M} _{\F'}$. 
Put 
$\mathfrak{M} _{\F',i} = 
 \begin{pmatrix}
 u^{s_i} & v_i \\ 0 & u^{t_i}
 \end{pmatrix} \cdot
 \mathfrak{M} _{\F'}$
for 
$1\leq i \leq 2$, $s_i ,t_i \in \mathbb{Z}$ and 
$v_i \in \F((u))$.
Assume $\mathfrak{M} _{\F',1}$ and $\mathfrak{M} _{\F',2}$ 
correspond to $x_1 , x_2 \in \mathscr{GR}_{V_{\F},0}(\F')$ 
respectively. 
Then $x_1 =x_2$ if and only if
\[
s_1 =s_2,\ t_1 =t_2 \textrm{ and } 
v_1 -v_2 \in u^{t_1} \F'[[u]].
\]
\end{lem}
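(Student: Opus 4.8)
The plan is to reduce the statement to elementary linear algebra over the discrete valuation ring $\F'[[u]]$ inside the two-dimensional $\F'((u))$-vector space $V$ spanned by the chosen basis $\{e_1,e_2\}$ of $\mathfrak{M}_{\F'}$. By Proposition \ref{description} the points $x_1$ and $x_2$ agree precisely when the submodules $\mathfrak{M}_{\F',1}$ and $\mathfrak{M}_{\F',2}$ coincide, so it suffices to decide when these two submodules are equal. Unwinding the definition of $B\cdot\mathfrak{M}_{\F'}$, the module $\mathfrak{M}_{\F',i}$ is the $\F'[[u]]$-span of $f_i:=u^{s_i}e_1+v_ie_2$ and $g_i:=u^{t_i}e_2$; since $u^{s_i}\neq 0$, these two elements are $\F'((u))$-linearly independent, hence really form a basis of $\mathfrak{M}_{\F',i}$.

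The ``if'' direction is immediate: if $s_1=s_2$, $t_1=t_2$, and $v_1-v_2=u^{t_1}w$ with $w\in\F'[[u]]$, then $f_1=f_2+wg_2$ and $g_1=g_2$, so $\mathfrak{M}_{\F',1}\subseteq\mathfrak{M}_{\F',2}$, and the reverse inclusion follows by interchanging the indices (using $v_2-v_1=-u^{t_1}w$).

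For the converse I would recover the three invariants one at a time. Intersecting with the line $\F'((u))e_2$: an element $af_i+bg_i$ with $a,b\in\F'[[u]]$ lies on this line exactly when $au^{s_i}=0$, i.e. $a=0$, so $\mathfrak{M}_{\F',i}\cap\F'((u))e_2=u^{t_i}\F'[[u]]e_2$; comparing these for $i=1,2$ forces $t_1=t_2$. Next, applying the projection $V\to\F'((u))e_1$ that annihilates $e_2$: the image of $\mathfrak{M}_{\F',i}$ is $u^{s_i}\F'[[u]]e_1$, so comparing forces $s_1=s_2$. Finally, since $f_1\in\mathfrak{M}_{\F',2}$ we may write $f_1=af_2+bg_2$ with $a,b\in\F'[[u]]$; comparing $e_1$-coefficients gives $a=1$, and then comparing $e_2$-coefficients gives $v_1-v_2=bu^{t_1}\in u^{t_1}\F'[[u]]$, as required. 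I do not expect a genuine obstacle here; the only point needing slight care is the verification that each $\{f_i,g_i\}$ is honestly a basis of $\mathfrak{M}_{\F',i}$, which legitimizes the intersection and projection computations above.
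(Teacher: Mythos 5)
Your proof is correct and takes essentially the same route as the paper: both reduce the statement to deciding when the two explicitly presented $\F'[[u]]$-lattices inside $M_\F\otimes_\F\F'$ coincide. The only difference is one of execution --- the paper writes the condition as the existence of $B\in GL_2(\F'[[u]])$ with $BA_1=A_2$ and computes $A_2A_1^{-1}$ directly, while you extract $t_i$, $s_i$ and $v_1-v_2$ one at a time via intersection with $\F'((u))e_2$, projection onto $e_1$, and expansion of $f_1$ in the basis $\{f_2,g_2\}$; both are sound.
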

\begin{proof}
The equality $x_1 =x_2$ is equivalent to 
the existence of $B \in GL_2 (\F'[[u]])$ 
such that 
\[
B 
 \begin{pmatrix}
  u^{s_1 } & v_1 \\ 0 & u^{t_1 } 
 \end{pmatrix} =
 \begin{pmatrix}
  u^{s_2 } & v_2 \\ 0 & u^{t_2 } 
 \end{pmatrix}. 
\] 
It is further equivalent to the condition that 
\[
 \begin{pmatrix}
 u^{s_2 -s_1 } 
 & v_2 u^{-t_1 } - u^{s_2 -s_1 -t_1 } v_1 \\
 0 & u^{t_2 -t_1 }
 \end{pmatrix} 
 \in GL_2 (\F'[[u]]). 
\]  
The last condition is equivalent to the desired condition. 
\end{proof}

\section{Main theorem}

\begin{thm}\label{mainthm}
Let $q$ be the cardinality of $\F$. 
Then we have 
\[
 |M(C_{\F} ,K)| = 
 \sum_{n \geq 0} (a_n +a_n ')q^n . 
\]
Here $a_n$ and $a_n '$ are defined as 
in the introduction. 
\end{thm}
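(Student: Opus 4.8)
The strategy is to compute $|\mathscr{GR}_{V_{\F},0}(\F)|$ by parametrizing the Kisin lattices $\mathfrak{M}_{\F} \subset M_{\F}$ explicitly. First I would fix a good basis of $M_{\F}$ over $\F((u))$ so that $M_{\F} \sim A$ for an explicit diagonal (or near-diagonal) matrix $A$; since $V_{\F}$ is the trivial representation, $M_{\F}$ is the "trivial" étale $\phi$-module and one may take $A = \mathrm{id}$, or a convenient twist thereof, after passing to the twist $V_{\F}(-1)$. By the theory of elementary divisors over $\F[[u]]$, any rank-two free $\F[[u]]$-sublattice $\mathfrak{M}_{\F} \subset M_{\F}$ can be written, with respect to a chosen basis of $M_{\F}$, in the form $\begin{pmatrix} u^{s} & v \\ 0 & u^{t} \end{pmatrix} \cdot M_{\F}$ for integers $s,t$ and $v \in \F((u))$; Lemma~\ref{equiv} tells us exactly when two such triples $(s,t,v)$ give the same point, namely $v$ matters only modulo $u^{t}\F[[u]]$ and $(s,t)$ is rigid once the flag is fixed. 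The condition $u^e \mathfrak{M}_{\F} \subset (1\otimes\phi)(\phi^*\mathfrak{M}_{\F}) \subset \mathfrak{M}_{\F}$ of Proposition~\ref{description} then translates, via the transformation rule $B\cdot\mathfrak{M}_{\F} \sim \phi(B)AB^{-1}$, into a finite list of valuation inequalities on $s$, $t$, and the coefficients of $v$.

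Next I would organize the count by the "type" of the lattice. There are essentially two strata, an "ordinary/split" type and a "non-split" type, reflected in the two families $a_n$ and $a_n'$ in the statement; the parameter $n$ records (roughly) the value of $\min(s,t)$ or of $v_u(v)$, i.e. how deep the sublattice sits. For each fixed stratum and each fixed discrete datum, the residual freedom is a choice of the finitely many "free" coefficients of $v$ (those in degrees where the inequalities do not force the coefficient to vanish but also do not make the coefficient irrelevant modulo $u^t\F[[u]]$); the number of such coefficients is exactly the exponent $n$ appearing in $q^n$, and the number of admissible values of the accompanying integer datum $(s,t,\dots)$ — subject to the inequalities $e_0 - (p+1)n_0 - n_1 - 1 \ge 0$ etc. — is precisely the integer $a_n$ or $a_n'$. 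So the combinatorial heart is: (i) solve the inequalities coming from $u^e\mathfrak{M}\subset(1\otimes\phi)(\phi^*\mathfrak{M})\subset\mathfrak{M}$ in terms of $s,t,v_u(v)$ and the base-$(p-1)$ digits of $e$; (ii) for each solution, count the free parameters of $v$ modulo the equivalence of Lemma~\ref{equiv}; (iii) sum a geometric-type series over the discrete data to read off $a_n$ and $a_n'$, treating the exceptional low-degree case ($n=0$, $e_1 = p-2$, where one inequality degenerates) separately, giving the stated value $a_0' = e_0$.

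The writing-up would proceed: (1) reduce to counting $\mathscr{GR}_{V_{\F},0}(\F)$ via Propositions~\ref{property} and~\ref{description}; (2) normalize $M_{\F}$ and reduce a general sublattice to upper-triangular form; (3) extract the defining inequalities from $u^e\mathfrak{M}_{\F}\subset(1\otimes\phi)(\phi^*\mathfrak{M}_{\F})\subset\mathfrak{M}_{\F}$; (4) split into the two types and, within each, enumerate admissible discrete data; (5) for each, count configurations of $v$ using Lemma~\ref{equiv}; (6) assemble the sum. The main obstacle I expect is step (3)–(5): correctly bookkeeping which coefficients of $v$ are genuinely free. The Frobenius $\phi$ sends $u \mapsto u^p$, so the inequalities mix degrees multiplicatively by $p$, and the interaction between the "lower bound" ($u^e\mathfrak{M}\subset(1\otimes\phi)\phi^*\mathfrak{M}$) and the "upper bound" ($(1\otimes\phi)\phi^*\mathfrak{M}\subset\mathfrak{M}$) produces the base-$(p-1)$ decomposition and the two competing $\max\{\,\cdot\,,0\}$ terms; getting the off-by-one constants ($-1$, $+1$, $-2$, $0$) right — and verifying that the boundary strata where these maxima vanish are counted correctly, including the $n=0$, $e_1=p-2$ anomaly — is the delicate part. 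Everything else is linear algebra over $\F[[u]]$ and a geometric series.
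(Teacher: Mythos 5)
Your plan follows the same route as the paper: reduce to counting $\mathscr{GR}_{V_{\F},0}(\F)$ via Propositions \ref{property} and \ref{description}, put every sublattice of $M_{\F}$ in the upper-triangular form $\begin{pmatrix} u^s & v \\ 0 & u^t \end{pmatrix}\cdot\mathfrak{M}_{\F,0}$, translate the Kisin condition into valuation inequalities on $s$, $t$, $v$, count the free coefficients of $v$ modulo the equivalence of Lemma \ref{equiv}, and sum $q^{h_{s,t}}$ over admissible $(s,t)$. One cosmetic correction: the two families $a_n$ and $a_n'$ do not separate an ``ordinary/split'' stratum from a ``non-split'' one; they correspond to whether $s+t\leq e_0$ or $s+t>e_0$, i.e.\ to which branch of $\max\{0,(p-1)(s+t)-e\}$ is active in the inequality $v_u\bigl(vu^{(p-1)s}-\phi(v)u^{(p-1)t}\bigr)\geq\max\{0,(p-1)(s+t)-e\}$.

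The substantive gap is in your step (5). You describe the free coefficients of $v$ as ``those in degrees where the inequalities do not force the coefficient to vanish,'' which is a term-by-term criterion; executed literally it always yields $q^{r_{s,t}}$ points in the stratum $(s,t)$ and undercounts. The inequality constrains the valuation of the \emph{difference} $vu^{(p-1)s}-\phi(v)u^{(p-1)t}$, and the leading terms of the two summands cancel exactly when $v_u(v)=s-t$, since then both equal $\alpha u^{ps-t}$ for $\alpha\in\F$. Hence in the strata where $ps-t<\max\{0,(p-1)(s+t)-e\}$ the coefficient of $u^{s-t}$ in $v$ is free even though each summand separately violates the bound, and the count is $q^{r_{s,t}+1}$, not $q^{r_{s,t}}$. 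This extra factor of $q$ is precisely what produces the sets $S_{n,2}$ and $S_{n,2}'$ in the paper's enumeration, i.e.\ the second $\max$ terms in $a_n$ and $a_n'$ when $n_1=1$ (resp.\ $n_1'=1$); without it the final formula is wrong. Beyond this, the whole quantitative content of the theorem --- the four-case formula for $h_{s,t}$, the enumeration of $(s,t)$ with $h_{s,t}=n$, and the $n=0$, $e_1=p-2$ anomaly --- is deferred in your write-up, so as it stands it is a correct roadmap of the argument rather than a proof.
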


\begin{proof}
Since $V_{\F}$ is the trivial representation, 
$M_{\F} \sim 
\begin{pmatrix}
 1 & 0 \\ 0 & 1 
\end{pmatrix}$ 
for some basis. 
Let $\mathfrak{M} _{\F,0}$ be the lattice of 
$M_{\F}$ generated by the basis giving 
$M_{\F} \sim 
\begin{pmatrix}
 1 & 0 \\ 0 & 1 
\end{pmatrix}$.
By the Iwasawa decomposition, 
any sublattice of $M_{\F}$ can be written as 
$\begin{pmatrix}
  u^s & v \\ 0 & u^t 
 \end{pmatrix} \cdot 
 \mathfrak{M} _{\F,0}$
for $s,t \in \mathbb{Z}$ and $v \in \F((u))$. 
We put 
\[
 \mathscr{GR}_{V_{\F},0,s,t}(\F) =
 \biggl\{
 \begin{pmatrix}
  u^s & v \\ 0 & u^t 
 \end{pmatrix} \cdot 
 \mathfrak{M} _{\F,0}
 \in
 \mathscr{GR}_{V_{\F},0}(\F)
 \biggm| 
 v \in \F((u))
 \biggr\}. 
\]
Then
\[
 \mathscr{GR}_{V_{\F},0}(\F) =
 \bigcup_{s,t \in \mathbb{Z}}
 \mathscr{GR}_{V_{\F},0,s,t}(\F) 
\]
and this is a disjoint union by Lemma \ref{equiv}. 

We put 
\[
 \mathfrak{M} _{\F,s,t} 
 = \begin{pmatrix}
  u^s & 0 \\ 0 & u^t 
 \end{pmatrix} \cdot 
 \mathfrak{M} _{\F,0}.
\] 
Then we have 
$\mathfrak{M} _{\F,s,t} \sim 
 \begin{pmatrix}
  u^{(p-1)s} & 0 \\ 0 & u^{(p-1)t} 
 \end{pmatrix}$ 
with respect to the basis induced from 
$\mathfrak{M} _{\F,0}$. 
Any $\mathfrak{M}_{\F}$ in 
$\mathscr{GR}_{V_{\F},0,s,t}(\F)$ 
can be written as 
$\begin{pmatrix}
  1 & v \\ 0 & 1 
 \end{pmatrix} \cdot 
 \mathfrak{M} _{\F,s,t}$ 
for $v$ in $\F((u))$. 
Then we have 
\[
 \mathfrak{M} _{\F} \sim 
 \begin{pmatrix}
  u^{(p-1)s} & -vu^{(p-1)s} +\phi (v)u^{(p-1)t} 
  \\ 0 & u^{(p-1)t} 
 \end{pmatrix}
\]
with respect to the induced basis. 
The condition 
$u^e \mathfrak{M}_{\F} \subset 
(1\otimes \phi ) \bigl( \phi ^* (\mathfrak{M}_{\F}) \bigr)
\subset \mathfrak{M}_{\F}$ 
is equivalent to the following:
\begin{align*}
 0 \leq (p-1)s \leq e,\ 
 &0 \leq (p-1)t \leq e,\\ 
 &v_u (vu^{(p-1)s} -\phi (v)u^{(p-1)t}) \geq 
 \max \bigl\{ 0,(p-1)(s+t)-e \bigr\}.
\end{align*}
Conversely, $s,t \in \mathbb{Z}$ and 
$v \in \F((u))$ satisfying this condition gives 
a point of $\mathscr{GR}_{V_{\F},0,s,t}(\F)$ 
as 
$\begin{pmatrix}
  1 & v \\ 0 & 1 
 \end{pmatrix} \cdot 
 \mathfrak{M} _{\F,s,t}$. 
We put $r=-v_u (v)$. 

We fix 
$s,t \in \mathbb{Z}$ such that 
$0 \leq s,t \leq e_0$. 
The lowest degree term of 
$vu^{(p-1)s}$ is equal to 
that of $\phi (v)u^{(p-1)t}$ 
if and only if 
$v_u (v)=s-t$, 
in which case 
$v_u (vu^{(p-1)s})=ps-t$. 

In the case where $ps-t \geq \max \bigl\{ 0,(p-1)(s+t)-e \bigr\}$, 
the condition 
$v_u (vu^{(p-1)s} -\phi (v)u^{(p-1)t}) \geq 
 \max \bigl\{ 0,(p-1)(s+t)-e \bigr\}$ 
is equivalent to 
\[
 \min \bigl\{ v_u (vu^{(p-1)s}), 
 v_u\bigl( \phi (v)u^{(p-1)t} \bigr) 
 \bigr\} \geq 
 \max \bigl\{ 0,(p-1)(s+t)-e \bigr\}, 
\]
and further equivalent to 
\[
 r \leq \min \biggl\{
 (p-1)s, \frac{e-(p-1)s}{p}, 
 e-(p-1)t, \frac{(p-1)t}{p} 
 \biggr\}. 
\]
We put 
\[
 r_{s,t} = \min \Biggl\{
 (p-1)s, \biggl[ \frac{e-(p-1)s}{p} \biggr], 
 e-(p-1)t, \biggl[ \frac{(p-1)t}{p} \biggr] 
 \Biggr\}. 
\]
In this case, the number of the points of 
$\mathscr{GR}_{V_{\F},0,s,t}(\F)$ is 
equal to $q^{r_{s,t}}$ 
by Lemma \ref{equiv}. 

Next, we consider the case where 
$ps-t < \max \bigl\{ 0,(p-1)(s+t)-e \bigr\}$. 
We claim that the condition 
$v_u (vu^{(p-1)s} -\phi (v)u^{(p-1)t}) \geq 
 \max \bigl\{ 0,(p-1)(s+t)-e \bigr\}$ 
is satisfied if and only if 
\[
 v=\alpha u^{s-t} +v_+ 
 \textrm{ for } 
 \alpha \in \F 
 \textrm{ and } 
 v_+ \in \F((u)) 
 \textrm{ such that } 
 -v_u (v_+ ) \leq r_{s,t}. 
\]
Clearly, the latter implies the former. 
We prove the converse. 
We assume that the former condition. 
If 
\[
 \min \bigl\{ v_u (vu^{(p-1)s}), 
 v_u\bigl( \phi (v)u^{(p-1)t} \bigr) 
 \bigr\} \geq 
 \max \bigl\{ 0,(p-1)(s+t)-e \bigr\}, 
\]
we may take $\alpha=0$. 
So we may assume that 
\[
 \min \bigl\{ v_u (vu^{(p-1)s}), 
 v_u\bigl( \phi (v)u^{(p-1)t} \bigr) 
 \bigr\} < 
 \max \bigl\{ 0,(p-1)(s+t)-e \bigr\}. 
\]
Then the lowest degree term of 
$vu^{(p-1)s}$ is equal to 
that of $\phi (v)u^{(p-1)t}$, 
and the lowest degree term of 
$v$ can be writen as 
$\alpha u^{s-t}$ for $\alpha \in \F^{\times}$. 
We put $v_{+} =v-\alpha u^{s-t}$. 
We can see $-v_u (v_+ ) \leq r_{s,t}$, 
because 
$v_u (v_{+} u^{(p-1)s} -\phi (v_{+} )u^{(p-1)t}) \geq 
 \max \bigl\{ 0,(p-1)(s+t)-e \bigr\}$ and 
the lowest degree term of 
$v_{+} u^{(p-1)s}$ can not be equal to 
that of $\phi (v_{+} )u^{(p-1)t}$. 
Thus the claim has been proved, 
and the number of the points of 
$\mathscr{GR}_{V_{\F},0,s,t}(\F)$ is 
equal to $q^{r_{s,t} +1}$ 
by Lemma \ref{equiv}. 

We put 
$h_{s,t} =\log_q |\mathscr{GR}_{V_{\F},0,s,t}(\F)|$. 
Collecting the above results, 
we get the followings:
\begin{itemize}
 \item 
 If $s+t \leq e_0$ and $ps-t \geq 0$, 
 then $h_{s,t} =[(p-1)t/p]$. 
 \item
 If $s+t \leq e_0$ and $ps-t < 0$, 
 then $h_{s,t} =(p-1)s +1$. 
 \item
 If $s+t > e_0$ and 
 $ps-t \geq (p-1)(s+t) -e$, 
 then 
 $h_{s,t} =
 \bigl[ \bigl( e-(p-1)s \bigr) \big/ p \bigr]$. 
 \item
 If $s+t > e_0$ and 
 $ps-t < (p-1)(s+t) -e$, 
 then 
 $h_{s,t} = e-(p-1)t+1$. 
\end{itemize}

Now we have 
\[
 |M(C_{\F} ,K)|=
 \sum_{0 \leq s,t \leq e_0} 
 q^{h_{s,t}}. 
\]
We put 
\[
 S_n =\bigl\{ (s,t) \in \mathbb{Z}^2 \bigm| 
 0 \leq s,t \leq e_0 ,\ 
 h_{s,t} =n
 \bigr\},
\] 
and 
\begin{align*}
 S_{n,1} &=\bigl\{ (s,t) \in S_n \bigm|  
 s+t \leq e_0 ,\ 
 ps-t \geq 0
 \bigr\},\\
 S_{n,2} &=\bigl\{ (s,t) \in S_n \bigm|  
 s+t \leq e_0 ,\ 
 ps-t < 0
 \bigr\},\\
 S_{n,1} '&=\bigl\{ (s,t) \in S_n \bigm|  
 s+t > e_0 ,\ 
 ps-t \geq (p-1)(s+t) -e
 \bigr\},\\
 S_{n,2} '&=\bigl\{ (s,t) \in S_n \bigm|  
 s+t > e_0 ,\ 
 ps-t < (p-1)(s+t) -e
 \bigr\}. 
\end{align*}
It suffices to show that 
$|S_{n,1} |+|S_{n,2} | =a_n$ and 
$|S_{n,1} '|+|S_{n,2} '| =a_n '$.

Firstly, we calculate $|S_{n,1}|$.  
We assume $(s,t) \in S_{n,1}$. 
In the case $n_1 \neq 0$, 
we have $t=pn_0 +n_1 +1$ by 
$[(p-1)t/p]=(p-1)n_0 +n_1$. 
Then $ps \geq t =pn_0 +n_1 +1$ implies 
$s \geq n_0 +1$, and 
we have 
\[
 n_0 +1 \leq s \leq e_0 -pn_0 -n_1 -1.
\]
We note that if $t>e_0$, 
we have 
\[
 (e_0 -pn_0 -n_1 -1)-(n_0 +1)+1 =
 e_0 -(p+1)n_0 -n_1 -1<0. 
\] 
So we get 
\[
 |S_{n,1} | = 
 \max \{ e_0 -(p+1)n_0 -n_1 -1,0 \}. 
\]
In the case $n_1 =0$, 
we have $t=pn_0$ or $t=pn_0 +1$ by 
$[(p-1)t/p]=(p-1)n_0$. 
If $t=pn_0$, 
we have $n_0  \leq s \leq e_0 -pn_0$. 
If $t=pn_0 +1$, 
we have $n_0 +1 \leq s \leq e_0 -pn_0 -1$. 
So we get 
\[
 |S_{n,1} | = 
 \max \{ e_0 -(p+1)n_0 +1,0 \} + 
 \max \{ e_0 -(p+1)n_0 -1,0 \}. 
\]

Secondly, we calculate $|S_{n,2} |$. 
In the case $n_1 \neq 1$, 
we have $S_{n,2} =\emptyset$. 
In the case $n_1 =1$, 
we assume $(s,t) \in S_{n,2}$. 
Then $s=n_0$, and we have 
$pn_0 +1 \leq t \leq e_0 -n_0$. 
So we get 
\[
 |S_{n,2} |=
 \max \{ e_0 -(p+1)n_0 ,0 \}. 
\]
Collecting these results, 
we have $|S_{n,1} |+|S_{n,2} | =a_n$. 

Next, we calculate $|S_{n,1} '|$.  
We assume $(s,t) \in S_{n,1} '$. 
In the case $n_1 '\neq 0$, 
we have $s=e_0 -e_1 -pn_0 '-n_1 '-1$ by 
$\bigl[ \bigl( e-(p-1)s \bigr)
 \big/ p \bigr]=(p-1)n_0 '+n_1 ' +e_1$. 
We note that 
$\bigl[ \bigl( e-(p-1)s \bigr)
 \big/ p \bigr] =n\geq 0$ 
shows $s \leq e_0$. 
Then $ps-t \geq (p-1)(s+t) -e$ implies 
$pt \leq pe_0 -pn_0 ' -n_1 '-1$, and 
further implies 
$t \leq e_0 -n_0 ' -1$. 
So we have 
\[
 e_1 +pn_0 '+n_1 '+2 \leq t \leq e_0 -n_0 '-1.
\]
We note that 
$e_1 +pn_0 '+n_1 '+2=n+n_0 '+2 \geq 1$ 
and $e_0 -n_0 ' -1 \leq e_0$, 
because $n_0 ' \geq -1$. 
We note also that if $s<0$, 
then 
\[
 (e_0 -n_0 '-1)-(e_1 +pn_0 '+n_1 '+2)+1 =
 e_0 -e_1 -(p+1)n_0 '-n_1 '-2<0. 
\] 
So we get 
\[
 |S_{n,1} | = 
 \max \{ e_0 -e_1 -(p+1)n_0 '-n_1 '-2,0 \}. 
\]
In the case $n_1 '=0$, 
we have $s=e_0 -e_1 -pn_0 '-1$ 
or $s=e_0 -e_1 -pn_0 '$ by 
$\bigl[ \bigl( e-(p-1)s \bigr)
 \big/ p \bigr]=(p-1)n_0 '+e_1$. 
If $s=e_0 -e_1 -pn_0 '-1$, 
we have $e_1 +pn_0 '+2 \leq t \leq e_0 -n_0 '-1$. 
If $s=e_0 -e_1 -pn_0 '$, 
we have $e_1 +pn_0 '+1 \leq t \leq e_0 -n_0 '$. 
We note that $n_0 '\geq 0$, because $n_1 '=0$. 
So we get 
\[
 |S_{n,1} '| = 
 \max \{ e_0 -e_1 -(p+1)n_0 '-2,0 \} + 
 \max \{ e_0 -e_1 -(p+1)n_0 ',0 \}. 
\]

At last, we calculate $|S_{n,2} '|$. 
In the case $n_1 '\neq 1$, 
we have $S_{n,2} '=\emptyset$. 
In the case $n_1 '=1$, 
we assume $(s,t) \in S_{n,2} '$. 
Then $t=e_0 -n_0 '$, and we have 
$n_0 '+1 \leq s \leq e_0 -e_1 -pn_0 '-1$. 
Here we need some care, 
because there is the case $n_0 '=-1$, 
in which case $t >e_0$. 
Now $n_0 '=-1$ is equivalent to 
$n=0$ and $e_1 =p-2$. 
So we get 
\[
 |S_{n,2} '|=
 \max \{ e_0 -e_1 -(p+1)n_0 '-1,0 \} 
\]
except in the case where 
$n=0$ and $e_1 =p-2$, 
in which case $S_{n,2} '=\emptyset$.
Collecting these results, 
we have $|S_{n,1} '|+|S_{n,2} '| =a_n '$. 
This completes the proof. 
\end{proof}

\begin{eg}
If $K=\mathbb{Q}_p (\zeta_p)$ 
and $\F =\F_p$, 
we have 
$\bigl| M \bigl( C_{\F_p} ,
 \mathbb{Q}_p (\zeta_p )\bigr) \bigr| =p+3$ 
by Theorem \ref{mainthm}. 
We know that 
$\mathbb{Z}/p\mathbb{Z} \oplus \mathbb{Z}/p\mathbb{Z}$, 
$\mathbb{Z}/p\mathbb{Z} \oplus \mu_p$ and 
$\mu_p \oplus \mu_p$ over 
$\mathcal{O}_{\mathbb{Q}_p (\zeta_p)}$ 
have the generic fibers 
that are isomorphic to $C_{\F_p}$. 
We can see 
$\bigl| \Aut (C_{\F_p} ) \bigr| =p(p+1)(p-1)^2$.
On the other hand, 
we have 
\[
 \Aut (\mathbb{Z}/p\mathbb{Z} \oplus \mu_p ) 
 \cong \Aut (\mathbb{Z}/p\mathbb{Z} ) \times 
 \Hom (\mathbb{Z}/p\mathbb{Z} ,\mu_p) \times 
 \Aut (\mu_p ), 
\]
because 
$\Hom (\mu_p ,\mathbb{Z}/p\mathbb{Z} ) =0$. 
In particular, we have 
$\bigl| \Aut (\mathbb{Z}/p\mathbb{Z} \oplus \mu_p ) \bigr| =p(p-1)^2$. 
Hence, there are $(p+1)$-choices of an isomorphism 
$C_{\F_p} \xrightarrow{\sim} 
 \bigl( \mathbb{Z}/p\mathbb{Z} \oplus \mu_p 
 \bigr)_{\mathbb{Q}_p (\zeta_p )}$ 
that give the different elements of 
$M \bigl( C_{\F_p} ,\mathbb{Q}_p (\zeta_p )\bigr)$. 
So the equation 
$\bigl| M \bigl( C_{\F_p} ,
 \mathbb{Q}_p (\zeta_p )\bigr) \bigr|=1+(p+1)+1$ 
shows that there does not exist 
any other isomorphism class of 
finite flat models of $C_{\F_p}$. 
\end{eg}

\begin{rem}
Theorem \ref{mainthm} is 
equivalent to an explicit calculation of 
the zeta function of $\mathscr{GR}_{V_{\F},0}$, 
and we can see that 
$\dim \mathscr{GR}_{V_{\F},0} = 
 \max \bigl\{ n \geq 0 \bigm| 
 a_n +a_n ' \neq 0 \bigr\}$. 
\end{rem}


\begin{thebibliography}{Ray}

\bibitem[Kis]{Kis}
M.~Kisin, 
\emph{Moduli of finite flat group schemes, and modularity}, 
to appear in Ann.\ of Math.

\bibitem[Ray]{Ray}
M.~Raynaud, 
\emph{Sch\'{e}mas en groupes de type $(p,\ldots,p)$}, 
Bull.\ Soc.\ Math.\ France \textbf{102} (1974), 241--280. 

\end{thebibliography}
\end{document}